\newcommand{\abs}[1]{\left\lvert {#1} \right\rvert}
\newcommand{\C}{\mathbb{C}}
\newcommand{\eC}{\widehat{\mathbb{C}}}
\newcommand{\D}{\mathbb{D}}
\newcommand{\E}{\mathbb{E}}
\newcommand{\eE}{\widehat{E}}
\newcommand{\Ec}{\mathbb{C}\setminus E}
\renewcommand{\O}{\mathcal{O}}
\newcommand{\aut}{\operatorname{Aut}}
\newtheorem{thm}{Theorem}[section]
\newtheorem{cor}[thm]{Corollary}
\newtheorem{lemma}[thm]{Lemma}
\theoremstyle{definition}
\newtheorem{defn}[thm]{Definition}
\title{Conformal Rigidity of Polar Set Complements}
\date{February 18, 2022}
\keywords{Conformal Maps, M\"{o}bius Maps, Polar Sets, Capacity}
\subjclass[2020]{30C20 (Primary), 31A15 (Secondary)}
\author{Ratna Pal}
\address{Indian Institute of Science Education and Research, Berhampur 760010, India}
\email{ratnap@iiserbpr.ac.in}
\author{Koushik Ramachandran}
\address{Tata Institute of Fundamental Research, Centre for Applicable Mathematics, Bengaluru 560065, India}
\email{koushik@tifrbng.res.in}
\author{Sivaguru Ravisankar}
\address{Tata Institute of Fundamental Research, Centre for Applicable Mathematics, Bengaluru 560065, India}
\email{sivaguru@tifrbng.res.in}
\begin{document}

\begin{abstract}
Let $E$ be a closed polar subset of $\C$.
In this short note, we use elementary potential theoretic tools to show that any
conformal map on $\C\setminus{E}$ is necessarily a  M\"{o}bius map.
As a consequence we obtain that the group of conformal automorphisms of the complement of a closed polar set $E$ is a discrete subgroup of the  M\"{o}bius group, provided $\lvert E \rvert \ge 2$.
\end{abstract}

\maketitle

\section{Introduction}
Let $\Omega_1$ and $\Omega_2$ be domains in the complex plane. Let $\mathcal{A}\left(\Omega_1, \Omega_2\right)$ denote the space of conformal maps from $\Omega_1$ to $\Omega_2$. Recall that a map $f:\Omega_1\rightarrow \Omega_2$ is said to be conformal if it is bijective and holomorphic. If $\mathcal{A}\left(\Omega_1, \Omega_2\right)\neq\emptyset,$ we say that $\Omega_1$ and $\Omega_2$ are conformally equivalent domains. Understanding when two domains are conformally equivalent is a classical theme of study in complex analysis.The crowning achievements in this theory are the celebrated Riemann mapping theorem and its extension, the uniformization theorem due to Koebe.
The Riemann mapping theorem tells us that any proper simply connected domain in $\mathbb{C}$ is conformally equivalent to the unit disc.
On the other hand Koebe proved that any domain of finite connectivity is conformally equivalent to a circle domain. A domain is said to be a {\it circle domain} if every connected component of its boundary is either a single point or a circle.

A closely related problem is the study of $\aut(\Omega),$ the space 
of all automorphisms of $\Omega$ by which we mean the space of all conformal self-maps of $\Omega$. A standard fact that one sees in a first course in complex analysis is that $\aut(\D)$ consists of the Blaschke factors, $\aut(\C)$ consists of linear polynomials, and $\aut(\C\setminus\{0\})$ consists of multiples of $z$ and $1/z$. Koebe's theorem implies that a domain of connectivity two is conformally equivalent to an annulus and it is elementary to see that any automorphism of an annulus is either a rotation or an inversion.
For a finitely connected domain of connectivity at least three, Heins \cite{Heins}
showed that every automorphism extends as a M\"{o}bius map to the extended complex plane.
Furthermore, there are only finitely many automorphisms.

Short \cite{Short} proved that the automorphism group of a countably connected domain is either a Fuchsian group or a discrete elementary group of M\"{o}bius maps.
Short's result relies on a deep theorem of He--Schramm \cite{HeSch}  which
states that any countably connected domain is conformally equivalent to a circle domain.
This result of He--Schramm marks a significant advancement towards K\"{o}be's conjecture that any planar domain is conformally equivalent to a circle domain.
K\"{o}be's conjecture is still open for uncountably connected domains. 

In this article, we consider conformal maps on the  complement of a closed polar set in $\C$.
Polar sets are negligible sets in potential theory; they are the $-\infty$ level sets of subharmonic functions.
We note that there are uncountable polar sets and, since polar sets are totally disconnected (see \cite{Ra}*{Section 5.3}), their complements furnish examples of circle domains with uncountably many connected components.
We prove that any conformal map on the complement of a closed polar set in $\mathbb{C}$ is the restriction of a M\"{o}bius map.

Our result follows from a rigidity theorem of He-Schramm \cite{HeSch1}, which states that any conformal map from a circle domain, whose boundary has $\sigma$-finite linear measure, to any other circle domain is the restriction of a M\"{o}bius map.
It is well known that polar sets have zero linear measure (see \cite{Carleson}*{\S IV, Theorem 1}).
The proof of the aforementioned rigidity theorem uses advanced tools from geometric function theory.
In contrast, we provide here an elementary proof of a special case using only basic tools from complex analysis and potential theory.
We hope that this short note makes the rigidity result more accessible to a wider audience.

For related results and ideas we refer the interested reader to Ahlfors and Beurling \cite{AhlBeu}, Havinson \cite{Havinson}, Younsi \cite{Younsi}, and the references therein.

\subsection*{Acknowledgements}
We thank Anne-Katrin Gallagher, Dmitry Khavinson, Jiri Lebl, and Malik Younsi for helpful comments and feedback on the article. 

\section{Definitions and Notations}
We denote the extended complex plane by $\eC=\C\cup\{\infty\}$.
For a set $E\subset\C$, let $\eE=E\cup\{\infty\}$.
The space of holomorphic functions on a domain $\Omega$ will be denoted by $\O(\Omega)$.
For domains $\Omega_1$ and $\Omega_2$ in $\C$, $\mathcal{A}\left(\Omega_1, \Omega_2\right)$ denotes the space of conformal maps from $\Omega_1$ to $\Omega_2$.
We next recall some notions from potential theory.

\begin{defn}\label{energy}
Let $\mu$ be a finite Borel measure on $\C$ with compact support. The energy of the measure $\mu$ is defined by
$$I(\mu) = \int\int\log|z-w|d\mu(z)d\mu(w).$$
\end{defn}

\begin{defn}\label{defn:polar}
A subset $E$ of $\C$ is called \emph{polar} if $I(\mu) = - \infty$ for every finite Borel measure $\mu\neq 0$ for which the support of $\mu$ is a compact subset of $E$.
\end{defn}

It is easily checked that every finite or countable subset of $\C$ is polar. However, there are uncountable polar sets.
For a beautiful account of potential theory in the plane and its applications to complex analysis, we refer to the book by Ransford \cite{Ra}.

We use the following generalized notion of singularities of a holomorphic function.
\begin{defn}
Let $E$ be a closed polar subset of $\C$ and $f\in\O(\Ec)$.
We call points of $\eE$ as {\it singularities} of $f$.
The singularity $z_0\in\eE$ of $f$ is said to be
\begin{enumerate}[\qquad (i)]
	\item {\it removable} if $f$ is bounded on $U\cap (\C\setminus\E)$ for some neighbourhood $U$, in $\eC$, of $z_0$,
	\item {\it pole-like} if for every sequence $(z_n)$ in $\Ec$ converging to $z_0$, 
	\[f(z_n) \to \infty,\quad\text{and}\]
	\item {\it essential-like} if there are sequences $(z_n)$ and $(\widetilde{z}_n)$
	in $\Ec$ converging to $z_0$, and $\ell\in\C$ such that 
	\[f(z_n) \to \ell \quad\text{and}\quad f(\widetilde{z}_n) \to \infty.\]
	\end{enumerate}
\end{defn}
Note that every singularity of $f$ falls into one of the categories above.
The definition above does not require the singularity to be isolated.
This apparent relaxation for the notion of a removable singularity is justified by Lemma \ref{lemma:Removable}.
We drop the suffix ``-like'' when the singularity is isolated.

\section{Main Result}

We state our main result and make a few remarks here.
The proof is contained in Section~\ref{sec:proofs}.
\begin{thm}\label{thm:CptMainResult}
Let $E$ be a closed polar subset of $\C$ and $f$ be a conformal map on $\Ec$.
Then $f$ extends to a M\"{o}bius map on $\eC$.
Furthermore, $\C\setminus f(\Ec)$ is polar.
\end{thm}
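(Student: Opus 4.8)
The plan is to reduce the theorem to the assertion that $f$ has \emph{at most one} non-removable singularity on $\eE$; granting this, the rest is short. First, $f$ cannot have \emph{no} non-removable singularity, for then Lemma~\ref{lemma:Removable} would extend $f$ to a holomorphic map $\eC\to\C$, forcing $f$ to be constant and contradicting that $f$ is conformal. So there is exactly one non-removable singularity $z_0\in\eE$, and Lemma~\ref{lemma:Removable} extends $f$ holomorphically across every other point of $\eE$, producing a holomorphic $\hat f\colon\eC\setminus\{z_0\}\to\C$. Working near $z_0$ and using that the polar set $E$ has zero area (so that its image under $\hat f$ has empty interior), the open mapping theorem shows $\hat f$ is still injective near $z_0$; hence $z_0$ is an isolated singularity of an injective holomorphic function and is therefore removable or a pole --- essentiality being excluded by applying Casorati--Weierstrass on two disjoint annuli about $z_0$, whose images would then have to overlap. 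Removability of $z_0$ would again make $\hat f$ constant, so $z_0$ is a pole; thus $\hat f$ is a rational map, and injectivity forces degree one, i.e.\ $\hat f$ extends to a M\"{o}bius map $F$. Finally $\eC\setminus f(\Ec)=F(\eE)$ is the image of a polar set under a M\"{o}bius map and hence polar, so $\C\setminus f(\Ec)$ is polar.

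The heart of the matter --- and where polarity of $E$ is used essentially --- is the singularity assertion. Fix a singularity $z_0\in\C$ (the case $z_0=\infty$ is analogous after an inversion), and for $\rho>0$ let $C_\rho$ and $D_\rho$ be the circle and open disc of radius $\rho$ about $z_0$. Since closed polar sets have zero linear measure and $z\mapsto|z-z_0|$ is $1$-Lipschitz, for almost every small $\rho>0$ the circle $C_\rho$ misses $E$; fix such a $\rho$. Then $f$ is defined and injective on the compact circle $C_\rho$, so $J_\rho:=f(C_\rho)$ is a Jordan curve lying in $\C$, and $\eC\setminus J_\rho$ has exactly two components, one of which is a bounded subset of $\C$. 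As $E$ is totally disconnected, $D_\rho\setminus E$ and $\eC\setminus(\bar D_\rho\cup\eE)$ are both connected, so their $f$-images are connected open sets disjoint from $J_\rho$, each lying in one of the two components of $\eC\setminus J_\rho$. They cannot lie in the \emph{same} component: otherwise $f(\Ec)$, which is the union of these two images with $J_\rho=f(C_\rho)$, would be contained in the closure of that component, contradicting that $f(\Ec)$ is open and contains $J_\rho$, which is the boundary of the component. Hence one of the two images lies in the bounded component, so $f$ is bounded there; that is, \emph{either} $z_0$ is removable (if $f(D_\rho\setminus E)$ is the bounded image) \emph{or} every singularity outside $\bar D_\rho$ is removable (if $f(\eC\setminus(\bar D_\rho\cup\eE))$ is the bounded image). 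Letting $\rho\to0$ through admissible radii, if $z_0$ is not removable then every singularity other than $z_0$ is removable --- which is exactly the assertion.

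The step I expect to require the most care is making this Jordan-curve dichotomy watertight: one must check that $\Ec$, $D_\rho\setminus E$, and $\eC\setminus(\bar D_\rho\cup\eE)$ are connected (all consequences of the total disconnectedness of the polar set $E$), that $J_\rho$ separates $\eC$ into exactly two pieces with common boundary $J_\rho$, and that the two $f$-images land in \emph{opposite} pieces. The remaining ingredients are routine: extracting a circle $C_\rho$ avoiding $E$ from the vanishing linear measure of $E$; the open-mapping/zero-area argument for injectivity of $\hat f$; and the passage from ``rational and injective'' to ``M\"{o}bius''. I would state at the outset the elementary facts, used silently above, that closed polar sets are totally disconnected and of zero linear (hence zero planar) measure.
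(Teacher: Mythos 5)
Your proposal is correct, and its core is genuinely different from the paper's. The paper classifies every point of $\eE$ as removable, pole-like, or essential-like, rules out essential-like singularities by a radial-projection argument that requires knowing in advance that $\C\setminus f(\Ec)$ is polar (Ransford, Corollary 3.6.8), shows the pole-like set is discrete via the identity theorem applied to $1/f$, and finally reduces to a single pole by an open-mapping/injectivity argument. You replace all three of those lemmas with one Jordan-curve separation argument: the zero linear measure of $E$ yields circles $C_\rho$ about any singularity that avoid $E$, the connectivity of $D_\rho\setminus E$ and of the exterior minus $E$ (this is really Ransford, Theorem 3.6.3 --- polarity, not total disconnectedness per se, is the clean hypothesis, and it is what the paper cites) forces their images into opposite complementary components of the Jordan curve $f(C_\rho)$, and boundedness of one component gives the dichotomy ``$z_0$ removable or everything outside $\bar D_\rho$ removable.'' This is an attractive trade: you avoid invoking the polarity of the complement of the image as an input (you recover it at the end from the M\"obius extension and M\"obius invariance of polarity), at the price of using the Jordan curve theorem and the fact that polar sets have zero length --- a fact the paper quotes from Carleson but does not use in its elementary proof. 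I verified the delicate steps: the two images really cannot share a component (an open set containing the curve cannot sit inside the closure of one side), and your injectivity-of-the-extension argument via zero area does go through, since the overlap of the images of two disjoint neighbourhoods is a nonempty open set and the images of the exceptional polar pieces have measure zero, so a common value with both preimages in $\Ec$ exists.

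One phrase to tighten: Casorati--Weierstrass gives density of the image only for \emph{punctured neighbourhoods} of $z_0$, not for annuli bounded away from $z_0$. The standard fix, clearly what you intend, is to take one genuine annulus $\{r/2<|z-z_0|<r\}$ (whose image is open) and the disjoint punctured disc $\{0<|z-z_0|<r/2\}$ (whose image is dense if the singularity were essential); their images then meet, contradicting injectivity. With that wording corrected, and with the connectivity claims referred to polarity rather than total disconnectedness, the proof is complete.
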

As a corollary, we get the following result on automorphisms of $\Ec$.
\begin{cor}\label{Aut}
Let $E$ be a closed polar subset of $\C$.
Then $\aut(\Ec)$ is a subgroup of the M\"{o}bius group.
Moreover, any $f\in\aut(\Ec)$ satisfies $f(\eE) = \eE$.
\end{cor}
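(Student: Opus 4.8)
The plan is to deduce the corollary formally from Theorem~\ref{thm:CptMainResult}, so the argument is essentially bookkeeping. First I would fix $f \in \aut(\Ec)$; since $f$ is in particular a conformal map on $\Ec$, Theorem~\ref{thm:CptMainResult} supplies a M\"obius map on $\eC$ extending $f$. I would next observe that this extension is unique: $\Ec$ is a nonempty open subset of $\eC$, and two M\"obius maps that agree on a nonempty open set are identical. Denote the extension by $\widehat f$ and set $\Phi(f) = \widehat f$; thus every automorphism of $\Ec$ is, in a canonical way, the restriction of a M\"obius map.

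The second step is to check that $\Phi$ is an injective group homomorphism from $\aut(\Ec)$ into the M\"obius group. That $\aut(\Ec)$ is a group under composition is standard, the only point worth mentioning being that the inverse of a conformal map is conformal. Injectivity of $\Phi$ is immediate since $\widehat f$ restricts to $f$ on $\Ec$. For multiplicativity, given $f, g \in \aut(\Ec)$ I would use $g(\Ec) = \Ec$ to see that $\widehat f \circ \widehat g$ is a M\"obius map whose restriction to $\Ec$ equals $f \circ g$; by the uniqueness just established, $\Phi(f \circ g) = \widehat f \circ \widehat g$. Hence $\Phi\bigl(\aut(\Ec)\bigr)$ is a subgroup of the M\"obius group, and identifying each automorphism with its M\"obius extension yields the first assertion.

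For the final assertion I would regard $f \in \aut(\Ec)$ as its M\"obius extension, so that $f$ is a bijection of $\eC$ restricting to a bijection of $\Ec$ onto itself. Since $\eE = \eC\setminus\Ec$ — indeed $\eC\setminus(\C\setminus E) = E\cup\{\infty\} = \eE$ — and a bijection of $\eC$ carrying $\Ec$ onto $\Ec$ must carry the complementary set onto itself, we conclude $f(\eE) = \eE$.

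There is no genuine obstacle here: the corollary is a direct consequence of Theorem~\ref{thm:CptMainResult}. The only points requiring a moment's care are the uniqueness of the M\"obius extension (which is what makes $\Phi$ well defined and a homomorphism) and the elementary set identity $\eE = \eC\setminus\Ec$.
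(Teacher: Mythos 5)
Your proof is correct and is precisely the direct deduction from Theorem~\ref{thm:CptMainResult} that the paper intends (the paper states the corollary without writing out a proof): uniqueness of the M\"obius extension makes the identification of $\aut(\Ec)$ with a subgroup of the M\"obius group legitimate, and the set identity $\eC\setminus\Ec=\eE$ together with bijectivity of the extension gives $f(\eE)=\eE$.
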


In light of the above corollary, it is natural to ask how big $\aut(\Ec)$ is, as a subgroup of the M\"{o}bius group. For a generic closed polar set $E$ we would expect $\aut(\Ec)$ to be quite small since $f(\eE) = \eE$ places too many constraints on the M\"{o}bius map $f$.  Here are two standard and well known ways to capture this smallness.

 Note that $\aut(\Ec)$ is a closed subgroup of the locally compact group $SL(2, \mathbb{C})$. Let $\mu$ denote the canonical Haar measure on $SL(2, \mathbb{C})$. A result of Steinhaus states that a closed subgroup of a locally compact group having positive Haar measure is also open. Since $SL(2, \mathbb{C})$ is connected, we have $\mu(\aut(\Ec)) = 0$.  This does not entirely capture the smallness of $\aut(\Ec)$  because the same considerations would also yield that $\mu\left(\aut(\mathbb{D})\right) = 0$ even though the group is large.
    
A finer way to capture the size of $\aut(\Ec)$ is to consider its dimension as a Lie group.
It is known that that $\aut(\Ec)$ is always a discrete group (see \cite{Tsuji}*{Theorem X.48}), hence its dimension is $0$.
In contrast, $\aut(\mathbb{D})$ is a $3$ dimensional real Lie group.

\section{Proofs}\label{sec:proofs}
We now prove Theorem~\ref{thm:CptMainResult} via several lemmas. 

\begin{lemma}\label{lemma:Removable}
Let $E$ be a closed polar subset of $\C$ and $f\in\O(\Ec)$.
If $z_0\in\eE$ is a removable singularity of $f$, then $f$ extends to a holomorphic function in a neighbourhood of $z_0\in\eC$.
\end{lemma}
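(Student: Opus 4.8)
The plan is to reduce the statement to the classical Riemann removable singularity theorem by first handling the possibility that the singularity $z_0$ is non-isolated, which is the genuinely new feature here. By the definition of a removable singularity, there is a neighbourhood $U$ of $z_0$ in $\eC$ on which $f$ is bounded on $U\cap\Ec$. After a preliminary M\"obius change of coordinates I may assume $z_0\in\C$ (so that we work with an honest bounded holomorphic function, not something near $\infty$), and shrinking $U$ I may take $U$ to be a disc $D(z_0,r)$ with $\overline{D(z_0,r)}\cap E$ compact. The key point is that $E_0 := E\cap D(z_0,r)$ is a closed polar subset of the disc, and $f$ restricted to $D(z_0,r)\setminus E_0$ is bounded and holomorphic.

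The heart of the argument is then the following removability statement for bounded holomorphic functions across polar sets: a bounded holomorphic function on $D\setminus E_0$, where $E_0$ is a relatively closed polar subset of a domain $D$, extends holomorphically to all of $D$. I would prove this by the standard device of decomposing the extension problem into two pieces. Since $E_0$ is polar, it has zero logarithmic capacity, hence zero area (indeed zero linear measure, as noted in the introduction), so $D\setminus E_0$ is dense and connected, and one can define the candidate extension pointwise using the Cauchy integral formula over a small circle around each point of $E_0$; the issue is to show this is well-defined and holomorphic at points of $E_0$. Concretely, fix $w_0\in E_0$ and a small circle $\gamma$ around $w_0$ lying in $D\setminus E_0$ (possible since $E_0$, being totally disconnected, cannot separate a neighbourhood of $w_0$ — or more carefully, since polar sets have empty interior and zero capacity one finds such circles for a full-measure set of radii), and set $g(z) = \frac{1}{2\pi i}\int_\gamma \frac{f(\zeta)}{\zeta - z}\,d\zeta$ for $z$ inside $\gamma$. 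Then $g$ is holomorphic inside $\gamma$ and agrees with $f$ on the complement of $E_0$ there, because for any point $p$ inside $\gamma$ with $p\notin E_0$ one can first remove the remaining part of $E_0$ inside $\gamma$: the difference $f - g$ is bounded and holomorphic off $E_0$ near $p$, and we iterate / use that polar sets are removable for bounded harmonic functions. The cleanest route is to invoke the standard fact (Ransford, \cite{Ra}*{Theorem 5.2.1 or thereabouts}) that a closed polar set is removable for bounded \emph{harmonic} functions: applying this to $\operatorname{Re} f$ and $\operatorname{Im} f$ gives a harmonic extension, and a harmonic function which is holomorphic off a set of zero area is holomorphic everywhere (its $\bar\partial$ vanishes a.e. and is continuous, hence vanishes identically).

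So the steps, in order, are: (1) normalise so $z_0\in\C$ and reduce to a bounded holomorphic $f$ on $D(z_0,r)\setminus E_0$ with $E_0$ closed polar in the disc; (2) split $f$ into real and imaginary parts and invoke the removability of closed polar sets for bounded harmonic functions to get a harmonic extension $\tilde f$ on the whole disc; (3) observe that $\tilde f$ is holomorphic on the complement of the area-null set $E_0$, so $\partial\tilde f/\partial\bar z$ is a continuous function vanishing off a null set, hence identically zero, whence $\tilde f$ is holomorphic on $D(z_0,r)$; (4) if $z_0 = \infty$ was the original singularity, transport back via the M\"obius coordinate change. The main obstacle is step (2): one must either cite the bounded-harmonic removability of polar sets as a black box from \cite{Ra}, or, to keep things self-contained, reprove it — typically via the maximum principle together with the existence of a subharmonic function equal to $-\infty$ exactly on $E_0$ (the defining property of polar sets) used as a barrier to control the oscillation of the harmonic extension near $E_0$. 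Given that the paper advertises an "elementary proof using only basic tools from complex analysis and potential theory," I expect the authors to take the black-box route and cite Ransford for the harmonic removability, making the lemma's proof quite short.
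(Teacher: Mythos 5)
Your proposal is correct and takes essentially the same route as the paper: cite the removability of closed polar sets for bounded harmonic functions (Ransford, Corollary 3.6.2) applied to $\operatorname{Re} f$ and $\operatorname{Im} f$, then observe that the Cauchy--Riemann equations, which hold off the measure-zero set $E$ and involve only the (smooth) harmonic extensions, must therefore hold on the whole neighbourhood. The Cauchy-integral digression in your middle paragraph is unnecessary --- the paper takes exactly the ``black-box'' route you predicted, and its proof is the short one you describe in steps (2) and (3).
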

\begin{proof}
We first use the fact that closed polar sets are removable for bounded harmonic functions, see \cite{Ra}*{Corollary $3.6.2$}. Applying this to $u = \operatorname{Re} f$ and $v = \operatorname{Im} f$, we obtain that $u$ and $v$ extend harmonically to a full neighbourhood $U$ of $z_0$. We already know that $u$ and $v$ satisfy the Cauchy-Riemann equations on $U\setminus{E}.$ Since polar sets are of measure zero and harmonic functions are smooth, it follows that $u$ and $v$ satisfy the Cauchy-Riemann equations on all of $U.$
\end{proof}

\begin{lemma}\label{lemma:OnePole}
Let $E$ be a closed polar subset of $\C$, and let $f$ be a conformal map on $\Ec$.
Suppose $z_0$ is an isolated point of $\eE$ that is a pole of $f$.
Then, no other point of $\eE$ can be a pole-like singularity of $f$.
\end{lemma}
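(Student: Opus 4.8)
The plan is to argue by contradiction: suppose $z_0$ is an isolated point of $\eE$ that is a pole of $f$, and suppose some other point $z_1 \in \eE$, $z_1 \neq z_0$, is a pole-like singularity of $f$. I will show that $f$ then fails to be injective near $z_1$, contradicting conformality.

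First I would normalize. Since $z_0$ is an isolated point of $\eE$ that is a pole of $f$, on a punctured neighbourhood $\{0 < |z - z_0| < r\}$ (or $\{|z| > R\}$ if $z_0 = \infty$) the map $f$ is holomorphic with a pole at $z_0$, so $f$ has a Laurent expansion with finite principal part there; in particular $f$ omits no value in a neighbourhood of $\infty$ on that punctured disc once we shrink it — more precisely, for $w_0 \in \C$ large enough, $f(z) = w_0$ has a solution with $z$ near $z_0$ (the pole of $f$ "covers" a full neighbourhood of $\infty$ in $\widehat{\C}$). Equivalently, there is a neighbourhood $V_0$ of $z_0$ and a radius $\rho$ such that every $w$ with $|w| > \rho$ is attained by $f$ at some point of $V_0 \cap (\C \setminus E)$.

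Next I turn to $z_1$. Because $z_1$ is pole-like, $f(z_n) \to \infty$ for \emph{every} sequence $z_n \to z_1$ in $\Ec$; hence there is a neighbourhood $V_1$ of $z_1$, disjoint from $V_0$, with $|f| > \rho$ on $V_1 \cap (\C \setminus E)$. Pick any point $\zeta \in V_1 \cap (\C \setminus E)$; then $w := f(\zeta)$ satisfies $|w| > \rho$, so by the previous paragraph $w$ is also attained at some point $\xi \in V_0 \cap (\C\setminus E)$. Since $V_0$ and $V_1$ are disjoint, $\xi \neq \zeta$, yet $f(\xi) = f(\zeta) = w$, contradicting injectivity of $f$ on $\Ec$. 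This completes the argument.

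The main obstacle is making precise the claim that a genuine pole "covers a full neighbourhood of $\infty$": I need that near an isolated pole, $f$ takes every sufficiently large value. This follows from the open mapping theorem applied to $1/f$ (which has a zero at $z_0$ after filling in the removable singularity), so $1/f$ maps a small disc around $z_0$ onto a neighbourhood of $0$, i.e. $f$ maps a small punctured disc onto a neighbourhood of $\infty$ in $\widehat{\C}$; the only care needed is that $E$ near $z_0$ is just the single point $z_0$, which is exactly the hypothesis that $z_0$ is isolated in $\eE$. A secondary point to check is that $V_1$ can be chosen genuinely disjoint from $V_0$ and small enough that $|f| > \rho$ throughout $V_1 \cap (\C\setminus E)$; the first is automatic since $z_0 \neq z_1$, and the second is the definition of pole-like together with shrinking $V_1$.
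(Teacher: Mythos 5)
Your argument is correct and is essentially the paper's proof: both use the open mapping theorem at the isolated pole $z_0$ to see that $f$ covers a full punctured neighbourhood of $\infty$ from points near $z_0$, then use the pole-like behaviour at the second point to produce a value with two distinct preimages, contradicting injectivity. The only cosmetic difference is that you phrase the second step via a uniform bound $|f|>\rho$ on a neighbourhood of $z_1$ (which does follow from the sequential definition of pole-like), whereas the paper works directly with a sequence.
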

\begin{proof}
Since $z_0$ is a pole, the open mapping theorem guarantees that $f$ maps a punctured neighborhood $U_{z_0}$ of $z_0$ onto a punctured neighborhood $U_{\infty}$ of infinity.
Suppose that $w_0\neq z_0$ is a pole-like singularity of $f$. Then by definition, for every sequence $w_n\in\mathbb{C}\setminus{E}$, with $w_n\to w_0$, we have $f(w_n)\to\infty$. Let $W$ be a neighborhood of $w_0$ that is disjoint from $U_{z_0}$. Choosing a sequence $w_n\in W\cap(\C\setminus{E})$ with $w_n\to w_0$, we observe that for all large $n$, $f(w_n)$ belongs to $U_{\infty}$.
Hence there are points in $U_{\infty}$ which have at least two distinct preimages under $f$, one in $U_{z_0}$ and another in $W$.
The injectivity of $f$ now gives the desired contradiction. This proves that no other point of $\eE$ can be a pole-like singularity of $f$.
\end{proof}

\begin{lemma}\label{lemma:NoEssLikeE}
Let $E$ be a closed polar subset of $\C$ and $f$ be a conformal map on $\Ec$.
Then, no point of $\eE$ is an essential-like singularity of $f$.
\end{lemma}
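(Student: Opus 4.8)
The plan is to assume, for contradiction, that $z_0\in\eE$ is an essential-like singularity of $f$ and to violate the injectivity of $f$. Fix sequences $(z_n),(\widetilde z_n)$ in $\Ec$ with $z_n,\widetilde z_n\to z_0$, $f(z_n)\to\ell\in\C$ and $f(\widetilde z_n)\to\infty$. The first observation I would record is that $\ell\notin f(\Ec)$: if $\ell=f(p)$ for some $p\in\Ec$, then $p\neq z_0$ (since $p\notin E$ while $z_0\in\eE$), so the open mapping theorem applied on a small neighbourhood $N$ of $p$ gives, for all large $n$, a point $q_n\in N$ with $f(q_n)=f(z_n)$; as $z_n\notin N$ for large $n$, this contradicts injectivity. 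In particular $\ell$ is omitted by $f$ on $\Ec$.

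Next I would localize. Choose a small spherical disk $B$ about $z_0$ whose bounding circle $\partial B$ avoids the measure-zero set $E$, and set $D:=B\cap\Ec$; since $E$ is totally disconnected and relatively closed, $D$ is a domain, and $\partial B\subset\Ec$. Injectivity of $f$ makes $f|_{\partial B}$ a homeomorphism onto a Jordan curve $\gamma=f(\partial B)\subset\C$, so $\eC\setminus\gamma$ consists of two Jordan-domain components. Any sequence in $\Ec$ tending to $z_0$ eventually lies in $D$, so the cluster set of $f$ at $z_0$ is contained in $\overline{f(D)}$; it contains $\ell$ and $\infty$. As $f(D)$ is connected and disjoint from $\gamma$, it lies in a single component $\Delta$ of $\eC\setminus\gamma$, and since $\ell,\infty\in\overline{f(D)}$ while $\ell\notin f(\Ec)\supseteq\gamma$ and $\gamma$ is compact in $\C$, both $\ell$ and $\infty$ lie in $\Delta$.

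The analytic core is then to produce a bounded holomorphic function. Let $\phi\colon\Delta\to\D$ be a Riemann map ($\Delta$ is a Jordan domain in $\eC$, hence simply connected and $\neq\eC$). Then $h:=\phi\circ f|_D$ is holomorphic on $D$ and bounded by $1$; since $E\cap B$ is a closed polar set, the same removability argument as in Lemma~\ref{lemma:Removable} extends $h$ to a holomorphic function $\widetilde h$ on $B$ (when $z_0=\infty$ one additionally removes the ensuing isolated singularity at $\infty$ by boundedness). Now $\widetilde h$ is continuous at $z_0$, so, using that $z_n\in D$ for large $n$ and $f(z_n)\to\ell\in\Delta$, we get $\widetilde h(z_0)=\lim_n\widetilde h(z_n)=\lim_n\phi(f(z_n))=\phi(\ell)$; likewise $\widetilde h(z_0)=\phi(\infty)$. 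Injectivity of $\phi$ yields $\ell=\infty$, contradicting $\ell\in\C$.

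I expect the main obstacle to be the topological bookkeeping of the middle step: checking that $f(\partial B)$ really is a Jordan curve, that the portion of $\Ec$ near $z_0$ is carried strictly into one complementary Jordan domain $\Delta$, and that $\ell$ and $\infty$ land inside $\Delta$ rather than on $\gamma$ — the last point being exactly where $\ell\notin f(\Ec)$ is needed — together with the minor but fussy adjustments required when $z_0=\infty$. Once $\Delta$ is correctly pinned down, the chain ``Riemann map $\Rightarrow$ boundedness $\Rightarrow$ removability across the polar set $\Rightarrow$ contradiction with continuity'' is short and uses only tools already in play.
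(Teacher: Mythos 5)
Your argument is correct in outline, but it reaches the contradiction by a genuinely different route than the paper. The paper never localizes: it uses the fact that $\C\setminus f(\Ec)$ is polar (Ransford, Corollary 3.6.8) and shows, via connectivity of $D_r(z_0)\setminus E$ and the intermediate value theorem applied to $\abs{f}$ along paths joining $z_{n}$ to $\widetilde{z}_{n}$, that every circle $\abs{w}=\tau$ with $\tau>\abs{\ell}$ meets the omitted set; the contradiction then comes from the radial projection theorem for polar sets. You instead trap the cluster set of $f$ at $z_0$ inside one Jordan domain $\Delta$ cut out by $f(\partial B)$, uniformize $\Delta$, and use removability of polar sets for \emph{bounded} holomorphic functions to force the cluster set to be a single point. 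Your route buys a stronger local conclusion (continuity of $\phi\circ f$ at $z_0$, hence a one-point cluster set) and avoids both Corollary 3.6.8 and the radial projection theorem; the price is the topological bookkeeping you already identified, plus the separation step discussed below. Your preliminary observation that $\ell\notin f(\Ec)$, and its use in placing $\ell$ inside $\Delta$ rather than on $\gamma$, are correct and genuinely needed.

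One step is under-justified: the existence of a circle $\partial B$ about $z_0$ that misses $E$. ``$E$ has measure zero'' (i.e.\ zero area) does not suffice --- a ray emanating from $z_0$ has zero area yet meets every circle centred at $z_0$. What you actually need is that the circular projection $z\mapsto\abs{z-z_0}$ sends $E$ to a Lebesgue-null subset of $[0,\infty)$, so that almost every radius works. This does hold: polar sets have zero one-dimensional Hausdorff measure (the paper cites Carleson for this, and it is also the key input in Theorem \ref{alternate}), and the circular projection is $1$-Lipschitz; alternatively one can invoke the projection results in \cite{Ra}*{Section 5.3}. With that citation inserted (and noting that the connectedness of $B\setminus E$ is Theorem 3.6.3 of \cite{Ra}, not a consequence of total disconnectedness alone), your proof is complete. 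It is worth noting that both proofs ultimately lean on a projection property of polar sets --- the paper on radial projection of the omitted set, yours on circular projection of $E$ itself.
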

\begin{proof}
Suppose not. Then, there exist $z_0\in \eE$, sequences $(z_n)$ and $(\widetilde{z}_n)$ in $\Ec$ converging to $z_0$, and $\ell\in\C$ such that $f(z_n)\rightarrow\ell \in \mathbb{C}$ and $f(\widetilde{z}_n) \rightarrow \infty$.

Let $\Omega=f(\Ec)$.
By \cite{Ra}*{Corollary $3.6.8$}, we know that $\C\setminus\Omega$ is polar.
We use the following claim to get a contradiction.
\begin{center}
\begin{minipage}[t]{0.75\linewidth}
{\it Claim:} For every $\tau\in(\abs{\ell}, \infty)$, there exists a sequence $(w_n)$ in $\Ec$ converging to $z_0$ such that
\[
\abs{f(w_n)}=\tau \text{ for all } n,
\quad\text{and}\quad
\lim\limits_{n\to\infty} f(w_n)\in\C\setminus\Omega.
\]
\end{minipage}
\end{center}
Assume the claim is true for now.
Then, the radial projection of $\C\setminus\Omega$ contains the interval $(\abs{\ell},\infty)$.
Since radial projection preserves polar sets, see Theorem 5.3.1 in \cite{Ra}, this contradicts $\C\setminus\Omega$ being polar.

We now prove the claim above.
For all $r>0$, there exists $z_{n_r}, \widetilde{z}_{n_r} \in D_r(z_0)$ such that
\[\abs{f(z_{n_r})}  < \tau < \abs{f(\widetilde{z}_{n_r})}\]
for some sufficiently large $n_r$.
Note $D_r(z_0)\setminus E$ is open and connected, see Theorem 3.6.3 in \cite{Ra} for the latter fact.
Hence, there exists a path $\gamma_r$ in  $D_r(z_0)\setminus E$ that connects $z_{n_r}$ and $\widetilde{z}_{n_r}$.
The path $f(\gamma_r)$ lies in $\Omega$ and there exists $w_r\in \gamma_r$  such that $\abs{f(w_r)}=\tau$.
Letting $r=1/k$ and denoting $w_{1/k}$ by $w_k$, we see that $(w_k)$ is a sequence in $\Ec$ converging to $z_0$, with $\abs{f(w_k)}=\tau$ for all $k$.
By passing to a subsequence if needed, it follows that $(f(w_k))$ is convergent.
The limit of this sequence must lie in $\C\setminus\Omega$, for otherwise, the open mapping theorem will contradict the fact that $f$ is conformal. 
\end{proof}

\begin{lemma}\label{lemma:Discretepole}
With the same notations as above, let $f$ be conformal on $\Ec$.
Then, there exists a discrete countable subset $P\subseteq \eE$ such that every point in $\eE\setminus P$ is removable for $f$.
\end{lemma}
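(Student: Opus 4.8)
The plan is to let $P$ be the set of non-removable singularities of $f$ and to prove two things: that $\eE\setminus P$ consists exactly of the removable singularities, and that $P$ is discrete (hence countable). The first point is immediate from Lemma~\ref{lemma:NoEssLikeE}: every point of $\eE$ is a removable, pole-like, or essential-like singularity, the last being excluded, so $\eE\setminus P$ is precisely the set of removable singularities, while $P$ is precisely the set of pole-like singularities. All the real work therefore goes into establishing that $P$ is discrete in $\eC$.

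The crucial local fact I would isolate is this: if $z_0\in\eE$ is pole-like, there is an open neighbourhood $W$ of $z_0$ in $\eC$ such that every point of $\eE\cap W$ other than $z_0$ is a removable singularity of $f$. To prove it, first note that since $z_0$ is pole-like, $\abs{f}>1$ on $W_0\cap\Ec$ for some neighbourhood $W_0$ of $z_0$ (otherwise one extracts a sequence $z_n\to z_0$ in $\Ec$ with $\abs{f(z_n)}\le1$, contradicting pole-likeness). Hence $g:=1/f$ is holomorphic and bounded on $W_0\cap\Ec$, so $z_0$ is a removable singularity of $g$, and Lemma~\ref{lemma:Removable} extends $g$ to a function holomorphic on a full neighbourhood of $z_0$ with $g(z_0)=0$. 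Since $f$ is conformal it is not identically $\infty$, so the extended $g$ is not identically zero; thus $z_0$ is an isolated zero of $g$, and after shrinking to some $W\subseteq W_0$ we may assume $g$ is zero-free on $W\setminus\{z_0\}$. Then $f=1/g$ on $W\cap\Ec$ extends to the holomorphic function $1/g$ defined on $W\setminus\{z_0\}$, which in particular is bounded near every point of $\eE\cap W$ distinct from $z_0$; each such point is therefore a removable singularity of $f$.

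Granting this local fact, the discreteness of $P$ follows by excluding accumulation points. Suppose $(p_n)\subseteq P$ are distinct with $p_n\to z^\ast$; since $\eE$ is closed, $z^\ast\in\eE$ is a singularity, so it is removable, pole-like, or essential-like. Lemma~\ref{lemma:NoEssLikeE} rules out the essential-like case. If $z^\ast$ were removable, Lemma~\ref{lemma:Removable} would make $f$ holomorphic, hence bounded, on a neighbourhood of $z^\ast$, forcing every nearby singularity — in particular $p_n$ for large $n$ — to be removable, contradicting $p_n\in P$. If $z^\ast$ were pole-like, the local fact supplies a neighbourhood $W$ of $z^\ast$ whose only possibly non-removable point is $z^\ast$ itself, again contradicting $p_n\in P\cap W$ with $p_n\neq z^\ast$ for large $n$. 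Hence $P$ has no accumulation point in the compact space $\eC$; it is therefore closed and discrete, and so countable — in fact finite.

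I expect the local fact to be the only delicate step: one must check that the inversion $g=1/f$ is legitimate near a possibly non-isolated pole-like singularity, that Lemma~\ref{lemma:Removable} genuinely yields a holomorphic extension of $g$ with value $0$ at $z_0$, and that $1/g$ really re-extends $f$ across $E\cap W$ (here one uses that polar sets have empty interior, so $W\cap\Ec$ is dense in $W$). The case $z_0=\infty$ is handled by passing to the coordinate $1/z$ and is otherwise identical. Everything else is a routine dichotomy on the type of a limit singularity.
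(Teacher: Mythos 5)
Your proof is correct and follows essentially the same route as the paper: both arguments invert $f$ near a pole-like singularity, use Lemma~\ref{lemma:Removable} to extend $g=1/f$ holomorphically across $E$, and exploit that this extension is not identically zero (the paper phrases this via the identity theorem applied to an accumulating sequence of zeros of $g$, you via the isolatedness of the zero of $g$ at $z_0$, with the added care of treating separately the case where the accumulation point is removable). One minor point: Lemma~\ref{lemma:Removable} is stated for functions in $\O(\Ec)$, whereas your $g$ is defined only on $W_0\cap(\Ec)$, so you should either first normalize $0\notin f(\Ec)$ (as the paper does) or observe that the underlying removability theorem for bounded harmonic functions is local.
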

\begin{proof}
By Lemma~\ref{lemma:NoEssLikeE}, all points of $\eE$ are either removable or pole-like singularities of $f$.
So, by Lemma~\ref{lemma:Removable}, we may assume that all points of $\eE$ are pole-like singularities of $f$ and it suffices to show that $\eE$ is a discrete countable set.

Suppose to the contrary there exists a sequence $(z_n)$ of distinct points of $\eE$, with $z_n\to z_0\in \eE$.
Without loss of generality we may assume $0\not\in f(\Ec)$.
Then, there exists a neighborhood $U$ of $z_0$ such that the holomorphic function $g = 1/f$ is bounded above on $U\setminus{\eE}$.
Lemma~\ref{lemma:Removable} guarantees that $g$ extends to be holomorphic in $U$ and vanishes at each $z_n$ and at $z_0$.
The identity theorem guarantees that $g\equiv 0$ in $U$.
This violates the conformality of $f$.
Hence $\eE$ is discrete and countable.
\end{proof}

We are now ready to prove our main result.
\begin{proof}[Proof of Theorem~\ref{thm:CptMainResult}]
Just as earlier, let $\Omega=f(\Ec)$ with $\C\setminus\Omega$ being polar.
We will show that $f$ is a M\"{o}bius map.
To start with, note that by Lemma~\ref{lemma:Discretepole} there exists a discrete countable subset $P$ of $\eE$ such that all points of $\eE\setminus P$ are removable for $f$.
By Lemma~\ref{lemma:NoEssLikeE} there are no essential-like singularities for $f$.
This means $P$ consists of only pole-like singularities.
But since these are isolated, it means points of $P$ are honest poles.
It is easy to see that $P$ is nonempty.
Now, Lemma~\ref{lemma:OnePole} guarantees that $|P| = 1$.

If $P=\{\infty\}$, then $f$ extends to be an injective entire function and therefore of the form $f(z) = az + b$ for $a,b\in\C$ with $a\neq 0$.

On the other hand, if $P=\{z_0\}$ for some $z_0\in\C$, then $f$ has a solitary pole at $z_0\in E$ and every other point of $\eC$ is removable for $f$.
In particular, $f$ is meromorphic on $\eC$.
It is well known that meromorphic functions on the sphere are rational functions.
Injectivity ensures that the degree of the rational function $f$ is $1$.
In other words, $f$ is a M\"{o}bius map.
\end{proof}

After an earlier version of this article was made public, Malik Younsi suggested an alternate approach to Theorem \ref{thm:CptMainResult} which we give below.
We thank him for giving us permission to include his proof here.

In what follows, we change our viewpoint and consider compact polar sets $E$ on the sphere $\eC$.
We recall that a set $E$ is polar on the sphere $\eC$ if $E\setminus{\{\infty\}}$ is polar in $\C$.

\begin{thm}\label{alternate}
Let $E$ be a compact polar set on the sphere $\eC$. Let $f$ be a conformal map on $\eC\setminus{E}$. Then, $f$ extends to a M\"{o}bius map on $\eC$.
\end{thm}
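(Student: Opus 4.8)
The plan is to promote $f$ to a homeomorphism of $\eC$ which is holomorphic off $E$, and then to invoke the removability of closed polar sets for bounded holomorphic functions, the fact which underlies Lemma~\ref{lemma:Removable}. Put $\Omega=f(\eEc)$ and $F=\eC\setminus\Omega$. By the open mapping theorem $\Omega$ is open, so $F$ is compact, and as in the proof of Theorem~\ref{thm:CptMainResult} (via \cite{Ra}*{Corollary 3.6.8}) the set $F$ is polar, and hence totally disconnected.

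The crux is a cluster set argument. Fix $p\in E$. For small $r$ the set $B_r(p)\setminus E$, where $B_r(p)$ is a spherical ball about $p$, is open and connected by \cite{Ra}*{Theorem 3.6.3}, so the cluster set $C(f,p):=\bigcap_{r>0}\overline{f(B_r(p)\setminus E)}$ is a decreasing intersection of compact connected sets, hence itself compact and connected. It lies in $F$: if $z_n\to p$ in $\eEc$ with $f(z_n)\to w\in\Omega$, then continuity of $f^{-1}$ at $w$ would force $z_n=f^{-1}(f(z_n))\to f^{-1}(w)\notin E$, which is impossible. A connected subset of the totally disconnected set $F$ is a single point, so $\lim_{z\to p}f(z)$ exists; therefore $f$ extends to a continuous map $\widetilde f\colon\eC\to\eC$ with $\widetilde f(E)\subseteq F$. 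The same reasoning applied to the conformal map $f^{-1}$ on $\Omega=\eC\setminus F$ yields a continuous extension $\widetilde g$ of $f^{-1}$ with $\widetilde g(F)\subseteq E$, and by continuity together with the density of $\eEc$ in $\eC$ one gets $\widetilde g\circ\widetilde f=\mathrm{id}$ and $\widetilde f\circ\widetilde g=\mathrm{id}$ on all of $\eC$. Thus $\widetilde f$ is a homeomorphism of $\eC$ with $\widetilde f(E)=F$ and $\widetilde f(\eEc)=\Omega$.

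To finish, near a point $p\in E$ with $\widetilde f(p)\neq\infty$ the extension $\widetilde f$ is bounded, by continuity, and holomorphic off the polar set $E$; so $\operatorname{Re}\widetilde f$ and $\operatorname{Im}\widetilde f$ extend harmonically across $E$ and, polar sets having measure zero, still satisfy the Cauchy-Riemann equations there, exactly as in the proof of Lemma~\ref{lemma:Removable}. Near a point with $\widetilde f(p)=\infty$ one argues identically with $1/\widetilde f$, and $p=\infty$ is treated after composing with $z\mapsto 1/z$. Hence $\widetilde f$ is a holomorphic self-map of $\eC$, that is, a rational map, and being a homeomorphism it has degree one; so $\widetilde f$ is a M\"{o}bius map extending $f$. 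The delicate point I anticipate is the cluster set step --- in particular checking that $\widetilde f$ is continuous at a point of $E$ even when that point is approached through other points of $E$ (a routine diagonal argument using that every cluster set along $E$ is a singleton), together with the bookkeeping around $\infty$ when it lies in $E$ or in $F$. Once the continuous extension is available, promotion to a M\"{o}bius map is immediate from the polar removability already recorded in Lemma~\ref{lemma:Removable}.
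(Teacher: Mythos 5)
Your proof is correct, but it takes a genuinely different route from the one the paper gives for this theorem. The paper's argument (due to Younsi) normalizes $f(\infty)=\infty$ via Casorati--Weierstrass and injectivity, forms the auxiliary function $g(z)=\frac{f(z)-f(a)}{z-a}$, and then leans on two external facts -- that polar sets have zero one-dimensional Hausdorff measure and Painlev\'{e}'s theorem on removability for bounded holomorphic functions -- before finishing with Liouville. You instead observe that $F=\eC\setminus f(\eC\setminus E)$ is compact and polar (Corollary 3.6.8 of \cite{Ra}), hence totally disconnected, so that each cluster set of $f$ along $E$ is a nonempty compact connected subset of $F$ and therefore a singleton; this yields a homeomorphic extension of $f$ to the sphere, after which the bounded-harmonic removability of polar sets (exactly the mechanism of Lemma~\ref{lemma:Removable}) upgrades the extension to a rational map, necessarily of degree one. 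The net effect is that your argument stays entirely inside the toolbox already used for Theorem~\ref{thm:CptMainResult} -- Corollaries 3.6.2 and 3.6.8 and Theorem 3.6.3 of \cite{Ra} -- and avoids Painlev\'{e} and the zero-length fact altogether; it also replaces the singularity classification of Lemmas 4.2--4.4 (in particular the radial-projection argument against essential-like singularities) with the single cluster-set/total-disconnectedness observation, and it delivers as a bonus that $f$ extends to a homeomorphism of $\eC$ carrying $E$ onto $F$ before holomorphy of the extension is even established. The points you flag as delicate are genuinely the only ones needing care -- continuity of $\widetilde f$ at points of $E$ approached through $E$ (handled by the shrinking diameters of the nested sets $\overline{f(B_r(p)\setminus E)}$), and the routine adjustments to Corollary 3.6.8 and to the removability step when $\infty$ lies in $E$ or in $F$ -- and none of them is a gap.
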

\begin{proof}
We first assume that $E$ is compact in the plane $\C.$ Then, infinity is an isolated singularity of $f$. Using the injectivity of $f$ and a standard Casaroti-Weierstrass argument, it is easy to see that infinity can only be a pole or a removable singularity. After composing with a  M\"{o}bius map, we may assume without loss of generality that $f(\infty) = \infty.$ This implies that $f$ is bounded near $E$, for otherwise injectivity of $f$ will be violated. We now fix $a\in\C\setminus{E}$ and consider the function $g(z) = \frac{f(z)- f(a)}{z-a}$ on $\C\setminus{E}$. Then $g$ is holomorphic in $\eC\setminus{E}$ and bounded both at infinity and near $E$. We now use the fact that a polar set in the plane has zero one dimensional Hausdorff measure, see \cite{Tsuji}. By Painlev\'{e}'s theorem, see \cite{Younsi} for instance, sets with zero one dimensional Hausdorff measure are removable for bounded holomorphic functions. This means that $g$ extends to a bounded entire function on the plane, and thus constant by Liouville's theorem. Unraveling the definition of $g$, we get that $f(z) = cz+ d$ for some complex number $c\neq 0$. 

Suppose now $\infty\in E.$ Fix $z_0\in\eC\setminus{E},$ and consider the map $T(z) = \frac{1}{z-z_0}.$ Then $\widetilde E= T(E)$ is a compact polar set in the plane. Applying the proof of the previous paragraph to the function $\widetilde f = f\circ T^{-1}$ on $\eC\setminus{\widetilde E},$ we get our result. 
\end{proof}


\def\MR#1{\relax\ifhmode\unskip\spacefactor3000 \space\fi%
\href{http://www.ams.org/mathscinet-getitem?mr=#1}{MR#1}}

\begin{bibdiv}
\begin{biblist}

\bib{AhlBeu}{article}{
   author={Ahlfors, Lars},
   author={Beurling, Arne},
   title={Conformal invariants and function-theoretic null-sets},
   journal={Acta Math.},
   volume={83},
   date={1950},
   pages={101--129},
   issn={0001-5962},
}

\bib{Carleson}{book}{
   author={Carleson, Lennart},
   title={Selected problems on exceptional sets},
   series={Van Nostrand Mathematical Studies, No. 13},
   publisher={D. Van Nostrand Co., Inc., Princeton, N.J.-Toronto,
   Ont.-London},
   date={1967},
   pages={v+151},
}

\bib{Havinson}{article}{
   author={Havinson, S. Ja.},
   title={Removable singularities of analytic functions of the V. I. Smirnov
   class},
   language={Russian},
   conference={
      title={Some problems in modern function theory (Proc. Conf. Modern
      Problems of Geometric Theory of Functions, Inst. Math., Acad. Sci.
      USSR, Novosibirsk, 1976) (Russian)},
   },
   book={
      publisher={Akad. Nauk SSSR Sibirsk. Otdel. Inst. Mat., Novosibirsk},
   },
   date={1976},
   pages={160--166},
}

\bib{HeSch}{article}{
   author={He, Zheng-Xu},
   author={Schramm, Oded},
   title={Fixed points, Koebe uniformization and circle packings},
   journal={Ann. of Math. (2)},
   volume={137},
   date={1993},
   number={2},
   pages={369--406},
   issn={0003-486X},
}

\bib{HeSch1}{article}{
   author={He, Zheng-Xu},
   author={Schramm, Oded},
   title={Rigidity of circle domains whose boundary has $\sigma$-finite
   linear measure},
   journal={Invent. Math.},
   volume={115},
   date={1994},
   number={2},
   pages={297--310},
   issn={0020-9910},
}

\bib{Heins}{article}{
   author={Heins, Maurice},
   title={On the number of 1-1 directly conformal maps which a
   multiply-connected plane region of finite connectivity $p(>2)$ admits
   onto itself},
   journal={Bull. Amer. Math. Soc.},
   volume={52},
   date={1946},
   pages={454--457},
   issn={0002-9904},
}

\bib{Ra}{book}{
   author={Ransford, Thomas},
   title={Potential theory in the complex plane},
   series={London Mathematical Society Student Texts},
   volume={28},
   publisher={Cambridge University Press, Cambridge},
   date={1995},
   pages={x+232},
   isbn={0-521-46120-0},
   isbn={0-521-46654-7},
}

\bib{Short}{article}{
   author={Short, Ian},
   title={Conformal automorphisms of countably connected regions},
   journal={Conform. Geom. Dyn.},
   volume={17},
   date={2013},
   pages={1--5},
}

\bib{Tsuji}{book}{
   author={Tsuji, M.},
   title={Potential theory in modern function theory},
   note={Reprinting of the 1959 original},
   publisher={Chelsea Publishing Co., New York},
   date={1975},
   pages={x+590},
}

\bib{Younsi}{article}{
   author={Younsi, Malik},
   title={On removable sets for holomorphic functions},
   journal={EMS Surv. Math. Sci.},
   volume={2},
   date={2015},
   number={2},
   pages={219--254},
   issn={2308-2151},
}

\end{biblist}
\end{bibdiv}

\end{document}